\documentclass[a4paper, 11pt]{amsart}

\usepackage[T1]{fontenc}
\usepackage[utf8]{inputenc}
\usepackage{geometry}
\usepackage{amssymb}

\makeatletter
\@namedef{subjclassname@2020}{%
  \textup{2020} Mathematics Subject Classification}
\makeatother

\allowdisplaybreaks[2]

\newtheorem{theorem}{Theorem}[section]

\newtheorem{lemma}[theorem]{Lemma}

\theoremstyle{definition}
\newtheorem{remark}[theorem]{Remark}

\newtheorem{examples}[theorem]{Examples}
\newtheorem{example}[theorem]{Example}

\numberwithin{equation}{section}

\newcommand\QQ{\mathbb{Q}}
\newcommand\RR{\mathbb{R}}
\newcommand\NN{\mathbb{N}}
\newcommand\CC{\mathbb{C}}
\newcommand\ZZ{\mathbb{Z}}
\begin{document}

\title[The Wedderburn-Artin Theorem]{The Wedderburn-Artin Theorem}

\author{Matej Brešar} 

\address{Faculty of Mathematics and Physics, University of Ljubljana \&
Faculty of Natural Sciences and Mathematics, University of Maribor \& IMFM, Ljubljana, Slovenia}
\email{matej.bresar@fmf.uni-lj.si}

\thanks{Partially supported by ARIS Grant P1-0288}

\keywords{Wedderburn-Artin theorem, division ring, simple ring, prime ring, left artinian ring, minimal left ideal, idempotent, matrix unit}

\begin{abstract} 
The celebrated Wedderburn-Artin theorem states that 
a simple left artinian ring is isomorphic to the ring of matrices over a division ring.
    We give a short and self-contained proof
which avoids the use of modules. 
\end{abstract}

\subjclass[2020]{16K40, 16N60}

\maketitle

\section{Introduction}

 The following theorem can be considered the fundamental theorem of noncommutative algebra.

\begin{theorem}\label{AW} {\bf (Wedderburn-Artin Theorem)} If $R$ is a simple left artinian ring, then there exist a division ring $D$ and a positive integer $n$ such that $R\cong M_n(D)$.
\end{theorem}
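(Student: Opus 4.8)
The plan is to construct an explicit complete system of matrix units $\{e_{ij}\}$ inside $R$ and to recognize $e_{11}Re_{11}$ as the division ring $D$. Before touching artinianity, I would first extract the two consequences of simplicity that drive everything. Since $R$ is simple with $R^2\neq 0$, the ideal $R^2$ must equal $R$, so for nonzero two-sided ideals $I,J$ one gets $IJ=R\neq 0$; hence $R$ is prime, and in particular semiprime. Moreover, every nonzero idempotent $e$ satisfies $ReR=R$, because $ReR$ is a nonzero two-sided ideal (it contains $e=eee$).

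Next I would produce a single idempotent via a minimal left ideal. The descending chain condition furnishes a minimal left ideal $L$, and semiprimeness forces $L^2\neq 0$. Brauer's classical manipulation then applies: choosing $u\in L$ with $Lu=L$ yields an idempotent $e$ with $L=Re$, and a direct check shows that $D:=eRe$ is a division ring with unit $e$, since for every nonzero $a\in eRe$ the left ideal $Ra$ is nonzero and contained in the minimal ideal $Re$, hence equals $Re$, which delivers an inverse of $a$ inside $eRe$.

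The core of the proof is to decompose $R$ into finitely many minimal left ideals isomorphic to $L$. Writing $e_1=e$, I would use the Peirce splitting $R=R(e_1+\cdots+e_k)\oplus R(1-e_1-\cdots-e_k)$ and, whenever the second summand is nonzero, locate a minimal left ideal inside it; replacing its generating idempotent $f$ by $(1-e_1-\cdots-e_k)f$ makes it orthogonal to all the previous $e_i$. Iterating and invoking the descending chain condition on the left ideals $R(1-e_1-\cdots-e_k)$ shows the process terminates, producing orthogonal idempotents with $e_1+\cdots+e_n=1$ and each $Re_i$ minimal; this also pins down the identity of $R$, which the target ring $M_n(D)$ requires. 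Since $R$ is prime, each $e_iRe_j$ is nonzero, and the associated right-multiplication maps give left-ideal isomorphisms $Re_i\cong Re_j$. Normalizing them produces $e_{1i}\in e_1Re_i$ and $e_{i1}\in e_iRe_1$ with $e_{1i}e_{i1}=e_1$ and $e_{i1}e_{1i}=e_i$; setting $e_{ij}:=e_{i1}e_{1j}$ I would verify $e_{ij}e_{kl}=\delta_{jk}e_{il}$ and $\sum_i e_{ii}=1$.

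With the matrix units in hand, the decomposition $r=\sum_{i,j}e_ire_j$ together with $e_iRe_j=e_{i1}De_{1j}$ shows that every element of $R$ is uniquely $\sum_{i,j}e_{i1}d_{ij}e_{1j}$ with $d_{ij}\in D$, so $(d_{ij})\mapsto\sum_{i,j}e_{i1}d_{ij}e_{1j}$ is the desired isomorphism $M_n(D)\to R$. I expect the main obstacle to be the third step: carrying out the extraction of a full orthogonal family of idempotents and the normalization of the connecting elements entirely within ring theory, so that the matrix-unit relations hold exactly, rather than appealing to semisimplicity of modules and Schur's lemma.
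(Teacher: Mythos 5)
Your proposal is correct and takes essentially the same route as the paper: reduce simplicity to primeness, use a minimal left ideal and Brauer's manipulation (Lemma \ref{LD}) to produce an idempotent whose corner ring is a division ring, iterate until the orthogonal idempotents sum to $1$ using the descending chain condition on the left ideals $R(1-e_1-\cdots-e_k)$, and then use primeness to normalize connecting elements into a full set of matrix units (Lemmas \ref{finallem}, \ref{fajnr}, \ref{LMU}). The only cosmetic differences are that the paper runs the iteration inside the corner rings $(1-e_1-\cdots-e_k)R(1-e_1-\cdots-e_k)$, which it shows are again prime and left artinian (Lemma \ref{l23}), instead of your trick of replacing the new idempotent $f$ by $(1-e_1-\cdots-e_k)f$ to force orthogonality, and that the paper writes the isomorphism as $a\mapsto(e_{1i}ae_{j1})$ from $R$ to $M_n(e_{11}Re_{11})$ rather than constructing its inverse.
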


Joseph Wedderburn proved Theorem \ref{AW} for finite-dimensional algebras in 1908 \cite{W}. Some twenty years later, Emil Artin  generalized 
Wedderburn's seminal result to rings satisfying chain conditions \cite{A}.

We will present a proof that requires only
 basic knowledge of undergraduate algebra and does not involve the concept of a module. The standard 
module-theoretic approach is certainly efficient and yields further insight into the structure theory of rings, but  requires more prerequisites and may be  a bit difficult to understand for a beginner.

This paper is an adaptation and revision of the author's earlier paper  \cite{B} in which a proof of  Wedderburn's classical version of Theorem \ref{AW} is given. This proof is also presented in the book \cite{Bb}. The proof in the present paper is based on the same idea, but is more transparent and covers the general case.


In Section \ref{s2}, we give all necessary  definitions, provide  basic examples, and prove several lemmas.  
The proof of Theorem \ref{AW}, in fact of a  slightly more general version of this theorem, is given in
  Section \ref{s3}.

\section{Preliminaries}\label{s2}

We assume the reader is familiar with the concept of a (not necessarily commutative) ring.
Our rings will be assumed to be
 unital, i.e., they contain an element $1$, called a unity, that satisfies
$x=1x=x1$ for every ring element $x$. We remark that, like in \cite{B}, the existence of $1$ could be avoided,
 but at the cost of simplicity of proofs.

 To refresh the reader's memory, we start by recalling some basic definitions and facts.

An element $y$ is a left inverse of the element $x$ if $y x=1$. 
A right inverse $z$ is defined analogously. If $x$ has both  a left inverse $y$ and a right inverse $z$, then  $y=y(xz)=(yx)z=z$. A left inverse of $x$ thus
coincides with a right inverse of $x$. 
We call it the inverse of $x$ and denote it by $x^{-1}$. An element having an inverse is said to be invertible. 

A ring is  nonzero if  $0$ is not its only element (equivalently, $0\ne 1$).
A nonzero ring in which every nonzero element is invertible  is called a  division ring. 
Such a ring may not be commutative. For example, the ring of quaternions $\mathbb H$, with which  the reader is presumably familiar, is an example of a noncommutative division ring.
A commutative division ring is called a field. Standard examples are the fields  of rational numbers $\QQ$,
 real numbers $\RR$, and  complex numbers $\CC$.

A (ring) isomorphism is bijective map $\varphi$ from a ring $R$ to a ring $R'$ that satisfies $\varphi(a+b)=\varphi(a)+\varphi(b)$ and $\varphi(ab)=\varphi(a)\varphi(b)$ for all $a,b\in R$. We say that $R$ and $R'$ are isomorphic if there exists an isomorphism from $R$ to $R'$. In this case, we write $R\cong R'$.

An additive subgroup $I$ of a ring $R$ is called
a left ideal (resp. right ideal) if $u x\in I$ (resp. $xu\in I$) for all $u \in I$ and $x\in R$. If $I$ is both a left and a right ideal, then it is called  an ideal. In commutative rings, left and right ideals are automatically ideals.

We  introduce some notation.
For a subset  $S$ 
of a ring $R$ and  elements $a,b\in R$,  write $Sa= \{sa\,|\,s\in S\}$, $aS=\{as\,|\,s\in S\}$, and
$aSb=\{asb\,|\,s\in S\}$. 
By $RS$ we denote 
the set of all sums of elements of the form $xs$ with $x\in R$ and $s\in S$.
Note that $RS$ is the
left ideal of $R$ generated by $S$ (i.e., the smallest left ideal containing $S$). Taking $S=aR$, we obtain  $RaR$ which is the ideal of $R$ generated by $a$.
If $I$ and $J$ are left ideals of $R$, then $IJ$ denotes the set of all sums of elements $uv$ with $u\in I$ and $v\in J$. Observe that $IJ$ is again a left ideal (and is an ideal if $I$ and $J$ are ideals). If $I=J$,  we write $I^2$ for $IJ$.

In the next subsections, we introduce several notions that are not  always treated in  basic algebra courses.

\subsection{Simple Rings}

 A nonzero ring $R$ whose only ideals are $\{0\}$ and $R$ is called a {\bf simple ring}.

\begin{examples}\label{ex21} (a)
Obvious examples of simple rings are division rings, which  do not even contain left (or right) ideals different from  $\{0\}$ and $R$.
This is because if a left ideal $I$ of  any ring $R$ contains an invertible element $a$, then  $I=R$. Indeed, $x=(xa^{-1})a\in I$ for 
every $x\in R$.

(b)   Let  $D$ be a ring and $n\in \NN$. The set
   of all $n\times n$ matrices with entries in $D$ is a ring under the usual matrix addition and multiplication.
   We denote it by $M_n(D)$ (for $n=1$, this is just $D$). 
 Let $E_{ij}$ denote the matrix whose $(i,j)$ entry 
  is $1$ and all other entries are $0$. We call $E_{ij}$, $1\le i,j\le n$,  the {\bf standard matrix units}. 
By $aE_{ij}$ we denote the matrix whose $(i,j)$ entry is $a\in D$ and  other entries are $0$. 
Observe that for every  $A=(a_{ij})\in M_n(D)$, \begin{equation*}\label{emu}
E_{ij}AE_{kl} = a_{jk}E_{il}\quad\mbox{for all $1\le i,j,k,l\le n$.} 
\end{equation*}
Now assume  that $D$ is a division ring. 
Let $I$ be a nonzero ideal of $M_n(D)$. Take a nonzero $ (a_{ij})\in I$ and choose $j$ and $k$ such that $a_{jk}\ne 0$. From the displayed formula we see that $a_{jk}E_{il}\in I$ for all $i$ and $l$, and hence also
 $(d a_{jk}^{-1})E_{ii}\cdot a_{jk}E_{il} = dE_{il}\in I$ for every $d\in D$. Since every matrix in $M_n(D)$ is a sum of matrices of the form $dE_{il}$, $I=M_n(D)$.
We have thus proved that  $M_n(D)$ is a simple ring for every $n\in\NN$ and every division ring $D$. 
\end{examples}

\begin{remark}
If $a\ne 0$ is an element of a ring $R$ that does not 
 have a left inverse, then the left ideal  $Ra$ does not contain $1$ and is therefore different from $\{0\}$ and $R$. This  readily implies that division rings are actually the only rings  without proper nonzero left ideals. In particular, the  ring $M_n(D)$ has proper nonzero left ideals for every $n\ge 2$.
\end{remark}

 \subsection{Prime Rings}

A ring $R$ is called a {\bf prime ring}
if for all ideals $I$ and $J$ of $R$,
$IJ=\{0\}$ implies $I=\{0\}$ or $J=\{0\}$. Obviously, every simple ring is prime, and so is every ring without zero-divisors (i.e., a ring in which $ab=0$ implies $a=0$ or $b=0$). The ring of integers $\ZZ$ is thus prime,  but  not simple since $n\ZZ$ is an ideal of $\ZZ$ for every $n\in\NN$. 
   
   \begin{lemma}\label{defprime}
    Let $R$ be a ring. The following conditions  are equivalent:
 \begin{enumerate}
 
   \item[{\rm (i)}] $R$ is prime.
   \item[{\rm (ii)}] For all $a,b\in R$, $aRb =\{0\}$ implies $a=0$ or $b=0$.
 \item[{\rm (iii)}] For all left ideals $I$ and $J$ of $R$, $IJ=\{0\}$ implies $I=\{0\}$ or $J=\{0\}$.
  \end{enumerate}
   \end{lemma}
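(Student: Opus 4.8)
The plan is to establish the equivalence through the cycle (iii) $\Rightarrow$ (i) $\Rightarrow$ (ii) $\Rightarrow$ (iii). The implication (iii) $\Rightarrow$ (i) costs nothing: every ideal is in particular a left ideal, so condition (iii), specialized to ideals, is exactly (i). The content lies in the other two implications, both of which lean on the unitality of $R$.

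For (i) $\Rightarrow$ (ii), I would use the fact, already recorded in the Preliminaries, that the ideal generated by an element $a$ is $RaR$; note in addition that $a = 1\cdot a\cdot 1 \in RaR$. Given $a,b$ with $aRb=\{0\}$, set $I=RaR$ and $J=RbR$ and compute the product of these ideals. Grouping the factors gives $IJ = RaR\cdot RbR = Ra(RR)bR = R(aRb)R$, where I have used $RR=R$ (a consequence of $1\in R$) to collapse the middle. Since $aRb=\{0\}$, this yields $IJ=\{0\}$, so primeness forces $I=\{0\}$ or $J=\{0\}$; as $a\in I$ and $b\in J$, I conclude $a=0$ or $b=0$.

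For (ii) $\Rightarrow$ (iii), suppose $I,J$ are left ideals with $IJ=\{0\}$, and assume toward a contradiction that both are nonzero, so that I may pick $a\in I$ and $b\in J$ with $a\neq 0$ and $b\neq 0$. The key observation is that since $J$ is a left ideal, $rb\in J$ for every $r\in R$, and therefore $a(rb)\in IJ=\{0\}$; since $r$ was arbitrary, $aRb=\{0\}$. Condition (ii) then gives $a=0$ or $b=0$, contradicting the choice of $a$ and $b$. Hence $I=\{0\}$ or $J=\{0\}$, which is (iii).

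I do not expect a genuine obstacle here, as the argument is elementary. The two points that must be handled with care are the identities $RaR = (\text{ideal generated by } a)$ and $RR=R$, both of which depend essentially on the presence of $1$, and the observation in the final implication that landing inside the product $IJ$ uses only the left-ideal property of $J$ (not of $I$). These are precisely the places where the absence of a unity would complicate matters, as the author hints in the remark on avoiding $1$.
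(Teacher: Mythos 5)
Your proof is correct and follows essentially the same route as the paper: the cycle (iii)$\implies$(i) trivially, (i)$\implies$(ii) via the ideals $RaR$ and $RbR$, and (ii)$\implies$(iii) by observing that $aRb\subseteq IJ$ for $a\in I$, $b\in J$. The only difference is that you spell out the details the paper leaves implicit (the collapse $RR=R$ and the membership $a=1\cdot a\cdot 1\in RaR$, both using unitality), which is fine.
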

   
   \begin{proof} (i)$\implies$(ii). 
Observe that
$aRb=\{0\}$ yields $(RaR)(RbR)=\{0\}$, which by assumption implies 
$RaR=\{0\}$ and hence $a=0$, or $RbR=\{0\}$ and hence $b=0$.

(ii)$\implies$(iii). If left ideals $I$ and $J$ satisfy $IJ=\{0\}$, then
$aRb=\{0\}$ for every $a\in I$ and every $b\in J$. Therefore, by assumption, $a=0$ or $b=0$.

   (iii)$\implies$(i). Trivial.
   \end{proof}

\subsection{Left Artinian Rings}
A ring $R$ is said to be 
an  {\bf algebra} over a field $F$ if $R$  is also a vector space over $F$ and $\lambda(xy)=(\lambda x)y = x(\lambda y)$ holds for all $\lambda\in F$ and $x,y\in R$. A finite-dimensional algebra is  an algebra which is finite-dimensional as a vector space.

\begin{example} \label{eprvi}
    The ring $M_n(F)$ becomes a finite-dimensional  algebra if we endow it with the usual scalar multiplication $\lambda(a_{ij})=(\lambda a_{ij})$. The standard matrix units $E_{ij}$ form its basis, so its dimension is $n^2$.
\end{example}

The notion of a left artinian ring is a generalization of the notion of a finite-dimensional algebra. It is defined as follows: $R$ is  a {\bf left artinian ring}  (or is said to satisfy the {\bf descending chain condition} on left ideals) if 
for every descending chain $L_1\supseteq L_2\supseteq L_3\supseteq\dots$ of left ideals of $R$ there exists an $n_0\in\NN$ such that $L_n=L_{n_0}$ for every $n\ge n_0$.

\begin{examples} \label{eprvia} (a) Since a left ideal of an algebra is a  vector subspace (because if $\lambda\in F$
and $u\in L$ then $\lambda u = (\lambda 1)u\in L$), a finite-dimensional algebra is indeed left-artinian. 

(b) Another obvious example of a left artinian ring is every division ring.
Indeed,  we saw above that such a ring does not even have proper nonzero left ideals. We claim that, more generally, the matrix  ring $M_n(D)$ is  left artinian for every $n\in \NN$ and  every division ring $D$. If $D$ is commutative,   this follows from (a) and Example  \ref{eprvi}. 
It turns out that essentially the same proof  works if $D$ is not commutative. We omit details, but only indicate why this is true.
 One  defines a vector space over a division ring in exactly the same way as a vector space over a field (but we call it more precisely a left vector space), and shows 
that every such vector space has a basis and that all bases have the same cardinality. This enables one
to define the concept of dimension in the usual way. Now, the ring $M_n(D)$ is a vector space of dimension $n^2$ over $D$, and its left ideals are automatically vector subspaces. This implies that $M_n(D)$ is left artinian. (Incidentally, we do  not define algebras over noncommutative division rings, so $M_n(D)$ is a ring and a vector space over $D$, but not an algebra over $D$.)

(c) Being left  artinian is  a rather special property of a ring. For example, the ring $\ZZ$ has an infinite strictly descending chain of ideals $\ZZ\supsetneq 2\ZZ \supsetneq 4\ZZ \supsetneq 8\ZZ  \supsetneq \dots $
\end{examples}

\begin{remark}
    The ring $M_n(D)$ is thus simple and left artinian. The Wedderburn-Artin Theorem states that the converse of this observation is true.
\end{remark}
\begin{remark}
A right artinian ring is defined analogously via right ideals. However, we will consider only left artinian rings. This is just a matter of choice, the treatment of right artinian rings is essentially no different. In particular,
we can substitute right for left in Theorem \ref{AW}. (A non-simple left artinian ring, however, is not always right artinian.)   
\end{remark}

\subsection{Idempotents}  An element $e$ of a ring  $R$ is called an {\bf idempotent} if $e^2=e$.
Idempotents
$e$ and $f$ are said to be {\bf orthogonal}
if $ef=fe=0$. 
In this case, $e+f$ is  also an idempotent. 

\begin{examples} (a) The simplest pair of orthogonal idempotents is $0,1$. An idempotent different from
$0$ and $1$ is called a {\bf nontrivial idempotent}.

(b)
 If $e$ is an idempotent, then so is $1-e$, and 
 $e$ and $1-e$ are orthogonal.

   (c) The standard matrix units  $E_{11},E_{22},\dots, E_{nn}$ are pairwise orthogonal idempotents.
\end{examples}

\begin{lemma}\label{l21}
If $e$ and  $f\ne 0$ are  orthogonal idempotents in a ring $R$, then $R(1-e)\supsetneq R(1-e-f)$.   
\end{lemma}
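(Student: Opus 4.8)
The plan is to first establish the inclusion $R(1-e-f)\subseteq R(1-e)$ and then exhibit a single element of $R(1-e)$ that fails to lie in $R(1-e-f)$. Since $e$ and $f$ are orthogonal idempotents, $e+f$ is an idempotent, and hence so is $1-e-f$. Recall that $R(1-e)=\{y(1-e)\mid y\in R\}$ and $R(1-e-f)=\{y(1-e-f)\mid y\in R\}$, these being the left ideals generated by the respective elements.

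For the inclusion, I would compute the product $(1-e-f)(1-e)$. Expanding it and using $e^2=e$ together with $fe=0$, it should collapse to $1-e-f$. It then follows that $1-e-f=(1-e-f)(1-e)\in R(1-e)$, and since $R(1-e)$ is a left ideal, every element $y(1-e-f)=\bigl(y(1-e-f)\bigr)(1-e)$ also lies in $R(1-e)$; thus $R(1-e-f)\subseteq R(1-e)$.

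To see that the inclusion is strict, the natural candidate for a separating element is $f$ itself. That $f\in R(1-e)$ follows from the short computation $f(1-e)=f-fe=f$, which uses $fe=0$. The step I expect to carry the real content is showing $f\notin R(1-e-f)$, and here the key device is to multiply on the right by $f$: using $ef=0$ and $f^2=f$ one finds $(1-e-f)f=0$. Hence, if we had $f=y(1-e-f)$ for some $y\in R$, then multiplying this identity on the right by $f$ would give $f=f^2=y(1-e-f)f=0$, contradicting the hypothesis $f\ne 0$. This contradiction shows $f\notin R(1-e-f)$, so $f$ witnesses the strict inclusion $R(1-e)\supsetneq R(1-e-f)$. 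The main obstacle is simply recognizing this right-multiplication by $f$; once the annihilation identity $(1-e-f)f=0$ is in hand, the remaining verifications are routine idempotent computations.
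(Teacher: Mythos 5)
Your proof is correct and follows essentially the same route as the paper: the same identity $(1-e-f)(1-e)=1-e-f$ gives the inclusion, and $f$ is the same witness for strictness. The only difference is cosmetic — the paper derives the contradiction by right-multiplying $f=x(1-e-f)$ by $1-f$ (using $f(1-f)=0$), while you right-multiply by $f$ (using $(1-e-f)f=0$); both yield $f=0$ in one line.
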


\begin{proof}
    Note  that $(1-e-f)(1-e)= 1-e-f$, and hence  $$x(1-e-f)=x(1-e-f)(1-e)\in R(1-e)$$ for every $x\in R$. This proves that  $R(1-e)\supseteq R(1-e-f)$. 
    
 We have  $f=f(1-e)\in R(1-e)$, while
    $f=x(1-e-f)$ with $x\in R$ implies $$0=f(1-f)=x(1-e-f)(1-f)= x(1-e-f) = f,$$ a contradiction. Therefore,
    $R(1-e)\ne R(1-e-f)$.
\end{proof}

If $e$ is an idempotent in a ring $R$, then 
$eRe$ is clearly a ring with unity $e$. 

\begin{lemma}\label{l23} Let $e$ be an idempotent in a ring $R$.
\begin{enumerate}
    \item [{\rm (a)}] If $R$ is left artinian, then so is $eRe$.
    \item [{\rm (b)}] If $R$ is prime, then so is $eRe$.
\end{enumerate}
\end{lemma}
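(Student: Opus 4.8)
The plan is to treat the two parts separately, in each case transferring a property of $eRe$ back to the ambient ring $R$ through a suitable correspondence between (left) ideals.

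For part (a), the key idea is to associate to each left ideal $L$ of $eRe$ the left ideal $RL$ of $R$, and to show that $L$ can be recovered from $RL$ via the formula $e(RL)e = L$. First I would record the basic fact that, since $e$ is the unity of $eRe$ and $L\subseteq eRe$, every $\ell\in L$ satisfies $e\ell = \ell e = \ell$; hence $L = eLe$. The inclusion $L\subseteq e(RL)e$ is then immediate, because $L\subseteq RL$ (as $\ell = 1\cdot\ell$) and applying $e(\cdot)e$ gives $L = eLe\subseteq e(RL)e$. For the reverse inclusion, a typical element of $e(RL)e$ is a sum of terms $ex\ell e$ with $x\in R$ and $\ell\in L$; using $\ell = e\ell$ and $\ell e = \ell$ I would rewrite $ex\ell e = (exe)\ell$, and since $exe\in eRe$ and $L$ is a left ideal of $eRe$, this lies in $L$. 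Thus $e(RL)e = L$. Consequently, if $L_1\supseteq L_2\supseteq\cdots$ is a descending chain of left ideals of $eRe$, then $RL_1\supseteq RL_2\supseteq\cdots$ is a descending chain of left ideals of $R$, which stabilizes by hypothesis; applying $e(\cdot)e$ and using $e(RL_n)e = L_n$ shows that the original chain stabilizes as well. Hence $eRe$ is left artinian.

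For part (b), I would invoke the characterization of primeness from Lemma \ref{defprime}, applied to the ring $eRe$ (which is legitimate since $eRe$ is a ring with unity $e$). It suffices to show that for $a,b\in eRe$, the condition $a(eRe)b = \{0\}$ forces $a=0$ or $b=0$. Writing $a = ae$ and $b = eb$, for any $r\in R$ I would compute $arb = (ae)r(eb) = a(ere)b$, and since $ere\in eRe$ this product lies in $a(eRe)b = \{0\}$. Therefore $arb = 0$ for every $r\in R$, i.e.\ $aRb = \{0\}$, and the primeness of $R$ (again via Lemma \ref{defprime}(ii)) yields $a=0$ or $b=0$.

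I expect the only real obstacle to be the verification of $e(RL)e = L$ in part (a), and specifically the inclusion $e(RL)e\subseteq L$: it is exactly here that one must use the idempotent identities $e\ell = \ell e = \ell$ for $\ell\in L$ to absorb the stray factors of $e$ and land back inside the left ideal $L$ of $eRe$. Once this identity is in place, both statements follow by routine application of the respective hypotheses, with part (b) being essentially a one-line reduction to the primeness criterion already established.
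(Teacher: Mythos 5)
Your proof is correct and follows essentially the same route as the paper: part (a) uses the correspondence $L\mapsto RL$ and recovers $L$ by multiplying by $e$ (the paper uses the one-sided identity $e(RL)=L$ where you use $e(RL)e=L$, an immaterial difference), and part (b) is precisely the paper's reduction to condition (ii) of Lemma \ref{defprime} via $aRb=a(eRe)b$.
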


\begin{proof} (a)
    Let $L_1\supseteq L_2 \supseteq L_3\supseteq\dots$ be a  chain of left ideals of $eRe$. Then $RL_1\supseteq RL_2 \supseteq RL_3\supseteq\dots$
    is  a chain of left ideals of $R$. Since $R$ is left artinian, there exists an $n_0\in \NN$ such that
    $RL_n = RL_{n_0}$,
    and hence  $eRL_n = eRL_{n_0}$, for every $n\ge n_0$. However, since $L_n$ is  
    a left ideal of $eRe$,  $eRL_n = L_n$ for each $n$. 
    Therefore, $L_n = L_{n_0}$ for  $n\ge n_0$.

    (b) We verify that 
    $eRe$ satisfies condition (ii) of Lemma \ref{defprime}. Let $eae,ebe\in eRe$ be such that $(eae)eRe(ebe)=\{0\}$. 
    Since $R$ is prime, it follows that    $eae=(eae)e=0$ or $ebe=e(ebe)=0$, as desired.
\end{proof}

\subsection{Minimal Left Ideals}

  A nonzero left ideal $L$ of a ring $R$ is called a {\bf minimal left ideal} if $L$ does not properly contain a  nonzero left ideal of $R$. That is to say, if $I$ is a left ideal such that $I\subseteq L$,
  then $I=\{0\}$ or $I=L$.

\begin{example} \label{ex332}
Let $R=M_n(D)$ with $D$ a division ring, and let $L$ be the set of matrices in $R$ that have 
arbitrary entries in  the $i$th column and zeros in all other columns.
 It is an easy exercise to show that $L$ is a minimal left ideal of $R$. Observe that 
 $L = RE_{ii}$ where $E_{ii}$ is the standard matrix unit, and   $E_{ii}RE_{ii} = \{dE_{ii}\,|\, d\in D\}$, so $E_{ii}RE_{ii}\cong D$. 
\end{example}

\begin{lemma} \label{LD}
Let  $L$ be a  minimal left ideal of a ring $R$. If  $L^2\ne \{0\}$,  then   there exists an idempotent $e\in L$ such that $L=Re$ and 
$eR e$ is  a division ring.
\end{lemma}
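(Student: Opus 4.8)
The plan is to use the minimality of $L$ repeatedly: first to manufacture an idempotent generating $L$, and then to invert the elements of $eRe$.

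First I would exploit the hypothesis $L^2\ne\{0\}$ to find an element $a\in L$ with $La\ne\{0\}$. Since $La$ is a nonzero left ideal contained in $L$, minimality forces $La=L$. In particular $a\in La$, so there is some $e\in L$ with $ea=a$; note $a\ne 0$, hence $e\ne 0$. Next I would show that $e$ is an idempotent. The set $I=\{x\in L\mid xa=0\}$ is a left ideal of $R$ contained in $L$, and it is proper because $ea=a\ne 0$ shows $e\notin I$; by minimality $I=\{0\}$. Since $(e^2-e)a=e(ea)-ea=0$ and $e^2-e\in L$, we get $e^2-e\in I=\{0\}$, i.e.\ $e^2=e$. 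Then $Re$ is a nonzero left ideal inside $L$, so $Re=L$ by minimality, which gives the first two assertions.

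The remaining and main point is that $eRe$, which is a ring with unity $e\ne 0$, is a division ring. Given a nonzero element $eze\in eRe$, I note that $eze$ is a nonzero element of $L$, so $R(eze)$ is a nonzero left ideal contained in $L$ and hence equals $L=Re$. Thus $e\in R(eze)$, say $e=r(eze)$ for some $r\in R$; left-multiplying by $e$ and using $e^2=e$ yields $e=(ere)(eze)$, so $ere\in eRe$ is a left inverse of $eze$. Hence every nonzero element of $eRe$ has a left inverse.

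To finish I would deduce that a nonzero ring with unity in which every nonzero element has a left inverse is a division ring: if $b$ is a left inverse of $a\ne 0$ and $c$ is a left inverse of $b$, then $a=(cb)a=c(ba)=c$, so $ab=cb=e$, and $b$ is a two-sided inverse of $a$. Applying this inside $eRe$ shows $eRe$ is a division ring. I expect the algebraic manipulation producing the left inverse $ere$ to be the step most prone to slips, since it relies on inserting the idempotency relation $e^2=e$ at exactly the right places; the rest is a routine application of minimality and the short left-inverse clean-up.
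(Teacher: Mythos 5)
Your proof is correct and follows essentially the same route as the paper's: the idempotent is manufactured from $La=L$ via the annihilator ideal $I$, minimality gives $L=Re$, and minimality again produces left inverses in $eRe$, which are then upgraded to two-sided inverses by the standard left-inverse argument. The only cosmetic difference is that you package the last step as a general fact about rings in which every nonzero element has a left inverse, whereas the paper applies the left-inverse construction twice and cites the earlier remark that a left inverse coincides with a right inverse.
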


 \begin{proof} By assumption, 
there exists a $y\in L$ such that
 $L y\ne \{0\}$. As $L y$ is  a left ideal of $R$ contained in $L$,  $L y =L$ since $L$ is minimal. Therefore, there exists an $e\in L$ such that $ey =y$. Hence, 
 $e^2 y = ey$, meaning that 
 $e^2-e$ lies in the set $ J = \{z\in L \,|\, zy=0\}.$ Observe that $ J$ is a left ideal of $R$ contained in $L$.  Since $Ly\ne \{0\}$ and so $J\ne L$,
 it follows  that $J =\{0\}$. In particular, $e^2=e$. We have  $R e\subseteq L$ 
since $e\in L$, and since $0\ne e\in R e$ it follows from the minimality assumption that
 $L=R e$.
 
   Now consider the ring $eR e$. Let $a\in R$ be such that $eae\ne 0$. We must prove   that $eae$ is invertible in $eR e$. We have $\{0\}\ne R eae \subseteq R e =L$, and so $R eae =L$. Hence, $beae = e$  for some  $b\in R$, which gives $(ebe)(eae)=e$. Since $ebe$ is  a nonzero element in $eR e$, by the same argument there exists a $c\in R$ such that $(ece)(ebe)=e$. But a left inverse  coincides with a right inverse, so  $eae = ece$ is invertible in $eRe$, with inverse $ebe$.
 \end{proof}

\begin{example}
Not every ring has a minimal left ideal. For example, the nonzero ideals of 
the ring $\ZZ$ are $n\ZZ$, $n\in\NN$, and none of them is minimal  since, say, $2n\ZZ\subsetneq n\ZZ$.\end{example}

\begin{lemma}\label{leftmi}
    A nonzero left artinian ring $R$ has a minimal left ideal.
\end{lemma}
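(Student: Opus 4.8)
The plan is to argue by contradiction, turning the failure to have a minimal left ideal into an infinite strictly descending chain and invoking the descending chain condition. First I would observe that since $R$ is nonzero, $R$ itself is a nonzero left ideal, so the family of nonzero left ideals is nonempty; this gives a starting point $L_1=R$ for the construction.

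The inductive step is the core of the argument. Suppose, aiming for a contradiction, that $R$ has no minimal left ideal. Then given any nonzero left ideal $L_n$, this $L_n$ in particular is not minimal, so by the very definition of minimality there must exist a nonzero left ideal properly contained in it; I call such a choice $L_{n+1}$, so that $L_{n+1}\subsetneq L_n$ with $L_{n+1}\ne\{0\}$. Iterating this construction from $L_1=R$ produces an infinite chain $L_1\supsetneq L_2\supsetneq L_3\supsetneq\cdots$ in which every inclusion is strict.

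Such a chain never stabilizes, which directly contradicts the defining property of a left artinian ring: every descending chain $L_1\supseteq L_2\supseteq\cdots$ of left ideals must satisfy $L_n=L_{n_0}$ for all $n\ge n_0$. This contradiction forces the assumption to fail, so $R$ must possess a minimal left ideal.

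The only delicate point is that selecting $L_{n+1}$ at each stage formally uses dependent choice; this is entirely standard and I would not belabor it. One could instead phrase the statement abstractly, noting that the descending chain condition is equivalent to the minimal condition on the poset of left ideals, but the direct contradiction above is shorter and keeps the argument self-contained, matching the style of the rest of the paper. I expect no genuine obstacle here, since the result is essentially an unwinding of the definition of \emph{left artinian}.
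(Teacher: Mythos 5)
Your proof is correct and follows essentially the same route as the paper: both build a strictly descending chain $L_1\supsetneq L_2\supsetneq\cdots$ of nonzero left ideals by repeatedly using non-minimality, and both conclude via the descending chain condition (the paper phrases it as "the process must terminate," you phrase it as a direct contradiction, but the argument is identical). No gap; your remark on dependent choice is a fine point the paper silently glosses over as well.
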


\begin{proof}
Choose a nonzero left ideal $L_1$ of $R$ (e.g., $L_1=R$). 
 If $L_1$ is not minimal,  there exists a nonzero left ideal $L_2$ such that  $L_1\supsetneq L_2$. If $L_2$ is not minimal, then  $L_2\supsetneq L_3$ for some nonzero left ideal $L_3$.
Continuing this process we arrive  in a finite number of steps at a minimal left ideal; for if not, there would exist an infinite chain   $L_1\supsetneq L_2\supsetneq L_3\supsetneq  \dots$ 
\end{proof}

\subsection{Matrix Units}
Above, we defined the standard matrix units $E_{ij}$. Now we will consider  their abstract generalization.   
   
     Let $R$ be a ring and let $n\in\NN$. We call $\{e_{ij}\in R \,|\,1\le i,j\le n\}$  a {\bf set of $n\times n$ matrix units} \index{matrix units} if  
     $$e_{11}+e_{22}+\dotsb+e_{nn} = 1\quad\mbox{and}\quad
     e_{ij}e_{kl}=\delta_{jk}e_{il}$$
      for all $1\le i,j,k,l\le n$.  Here, $\delta_{jk}$ stands for the  Kronecker delta.

\begin{example}
    By taking the identity matrix for $A$ in the displayed formula in Example \ref{ex21}\,(b) we see that the standard matrix units $E_{ij}$ are indeed matrix units of $M_n(D)$. There are others. For example, if $S\in M_n(D)$ is invertible,  then $SE_{ij}S^{-1}$ are also matrix units.
\end{example}

  \begin{remark} \label{reim}If $e_{ij}$ are matrix units,
then each ring $e_{ii}Re_{ii}$ is isomorphic to $e_{11}Re_{11}$. 
   Indeed, it is an easy exercise to  verify that
  $$e_{ii}ae_{ii}\mapsto e_{1i}(e_{ii}ae_{ii})e_{i1} = e_{11}(e_{1i}ae_{i1})e_{11}$$
  is  an isomorphism.  Although not directly used in our proof of the Wedderburn-Artin Theorem, this helps in  understanding it better.
 In particular, it explains why the next lemma involves $e_{11}Re_{11}$.
  \end{remark}

\begin{lemma}\label{LMU}
If  a  ring $R$ contains a set of $n\times n$ matrix  units  $e_{ij}$, then $R\cong M_n(e_{11}R e_{11})$.
\end{lemma}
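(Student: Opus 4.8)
The plan is to write down an explicit map and verify directly that it is a ring isomorphism, exploiting only the two defining relations $e_{11}+\dots+e_{nn}=1$ and $e_{ij}e_{kl}=\delta_{jk}e_{il}$. Set $D=e_{11}Re_{11}$, which is a ring with unity $e_{11}$. For $a\in R$, I would define $\varphi(a)\in M_n(D)$ to be the matrix whose $(i,j)$ entry is $e_{1i}ae_{j1}$. One first checks that this entry really lies in $D$: since $e_{11}e_{1i}=e_{1i}$ and $e_{j1}e_{11}=e_{j1}$, we have $e_{11}(e_{1i}ae_{j1})e_{11}=e_{1i}ae_{j1}$, so $e_{1i}ae_{j1}\in e_{11}Re_{11}=D$. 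Additivity of $\varphi$ is immediate.

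The main computation is multiplicativity. The $(i,j)$ entry of $\varphi(a)\varphi(b)$ is $\sum_k (e_{1i}ae_{k1})(e_{1k}be_{j1})$. This is where the relations do the work: $e_{k1}e_{1k}=e_{kk}$, so the summand equals $e_{1i}a\,e_{kk}\,b\,e_{j1}$, and summing over $k$ while using $e_{11}+\dots+e_{nn}=1$ collapses the sum to $e_{1i}ab\,e_{j1}$, which is precisely the $(i,j)$ entry of $\varphi(ab)$. Hence $\varphi(ab)=\varphi(a)\varphi(b)$, and $\varphi$ is a ring homomorphism.

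It then remains to prove that $\varphi$ is bijective. For injectivity, suppose $\varphi(a)=0$, i.e.\ $e_{1i}ae_{j1}=0$ for all $i,j$. Writing $a=1\cdot a\cdot 1=\sum_{i,j}e_{ii}ae_{jj}$ and noting that $e_{ii}ae_{jj}=e_{i1}(e_{1i}ae_{j1})e_{1j}$, every summand vanishes, so $a=0$. For surjectivity, given $(d_{ij})\in M_n(D)$, I would exhibit the preimage $a=\sum_{i,j}e_{i1}d_{ij}e_{1j}$; a short computation with the Kronecker deltas gives $e_{1k}ae_{l1}=e_{11}d_{kl}e_{11}=d_{kl}$, the last equality because $d_{kl}\in D=e_{11}Re_{11}$. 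Thus $\varphi(a)=(d_{ij})$, establishing surjectivity.

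The genuine content is simply recognizing the correct formula for $\varphi$ — the choice $a\mapsto (e_{1i}ae_{j1})$ with the first index pinned at $1$, so that the output entries automatically land in $e_{11}Re_{11}$ — after which every verification is a mechanical manipulation of the matrix unit relations. I do not expect a real obstacle here; the only point requiring a moment's care is keeping the index bookkeeping straight in the multiplicativity step, and remembering that $e_{11}+\dots+e_{nn}=1$ is exactly what is needed to contract the sum over $k$.
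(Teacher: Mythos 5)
Your proposal is correct and follows essentially the same route as the paper: the same map $\varphi(a)=(e_{1i}ae_{j1})$, the same contraction of the sum over $k$ via $e_{11}+\dots+e_{nn}=1$ for multiplicativity, and the same decomposition $a=\sum_{i,j}e_{ii}ae_{jj}$ for injectivity. Your surjectivity argument, exhibiting the explicit preimage $\sum_{i,j}e_{i1}d_{ij}e_{1j}$, is just a slightly more explicit phrasing of the paper's observation that every matrix is a sum of matrices of the form $\varphi(e_{k1}ae_{1l})$.
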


\begin{proof} For every $a\in R$, write $a_{ij} = e_{1i}ae_{j1}$. Observe that $a_{ij} = e_{11}a_{ij} e_{11} \in e_{11}R e_{11}$. 
 Define $\varphi:R\to M_n(e_{11}R e_{11})$ by 
 $$\varphi(a)=(a_{ij}).
 $$
 Our goal is to show that $\varphi$ is an isomorphism.
 
It is clear that $\varphi(a+b)=\varphi(a)+\varphi(b)$ for all $a,b\in R$. The $(i,j)$ entry of $\varphi(a)\varphi(b)$ is equal to 
$$\sum_{k=1}^n e_{1i}ae_{k1}e_{1k}be_{j1} =e_{1i}a\Bigl(\sum_{k=1}^n e_{kk}\Bigr)be_{j1} = e_{1i}abe_{j1},$$
 which is the $(i,j)$ entry of $\varphi(ab)$. Therefore, $\varphi(ab)=\varphi(a)\varphi(b)$. If $a_{ij}=0$ for all $i,j$, then $e_{ii}ae_{jj} = e_{i1}a_{ij}e_{1j} =0$, and so $a=0$ since the sum of all $e_{ii}$ is $1$. Thus, $\varphi$ is injective. Finally, 
 observe that $\varphi(e_{k1}ae_{1l})$ is  the matrix whose $(k,l)$ entry is $e_{11}ae_{11}$ and all other entries are $0$. Since every matrix 
 in $M_n(e_{11}R e_{11})$ is a sum of such matrices, $\varphi$ is surjective.
\end{proof}

Lemma \ref{LMU} tells us that
in order to show that a ring $R$ is a  matrix ring, it is enough to find matrix units in $R$. The next lemma simplifies this  task.

\begin{lemma}
   \label{fajnr}
If a  ring $R$ contains
pairwise 
 orthogonal idempotents $e_{ii}$, $i=1,\dots,n$,  whose sum is  $1$,
 and elements $e_{1i}\in e_{11}Re_{ii}$ and $e_{i1} \in e_{ii}Re_{11}$, $i=2,\dots,n$,  satisfying $e_{1i}e_{i1} = e_{11}$
 and $e_{i1}e_{1i} = e_{ii}$, 
  then  the set $\{e_{ii}, e_{1i},e_{i1}\,|,i=1,\dots,n\}$ can be extended to a set of $n\times n$ matrix units. Accordingly, $R\cong M_n(e_{11}R e_{11})$.
\end{lemma}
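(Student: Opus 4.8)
The plan is to build all the missing matrix units out of the given first-row and first-column elements by the single formula $e_{ij} := e_{i1}e_{1j}$, and then verify the two defining relations of a set of matrix units directly, at which point Lemma \ref{LMU} finishes the job.

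First I would fix the convention that $e_{11}$ serves simultaneously as the $(1,1)$ entry, as the element indexed $e_{1\cdot}$ for the index $1$, and as the element indexed $e_{\cdot 1}$ for the index $1$; with this reading the formula $e_{ij}=e_{i1}e_{1j}$ makes sense for all $1\le i,j\le n$ and reproduces the given data. Indeed, for $j\ge 2$ the membership $e_{1j}\in e_{11}Re_{jj}$ yields the absorption identities $e_{11}e_{1j}=e_{1j}=e_{1j}e_{jj}$, and symmetrically $e_{i1}\in e_{ii}Re_{11}$ yields $e_{ii}e_{i1}=e_{i1}=e_{i1}e_{11}$; together with $e_{11}^2=e_{11}$ these show that $e_{i1}e_{1j}$ equals the prescribed element whenever $i=1$ or $j=1$, and equals the given idempotent $e_{ii}$ (via the hypothesis $e_{i1}e_{1i}=e_{ii}$) when $i=j$.

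The only computation with any content is the identity
\[
 e_{1j}e_{k1} = \delta_{jk}\,e_{11} \qquad (1\le j,k\le n).
\]
I would prove it by writing $e_{1j}=e_{1j}e_{jj}$ and $e_{k1}=e_{kk}e_{k1}$ and inspecting the factor $e_{jj}e_{kk}$ in the middle: when $j\ne k$ orthogonality of the idempotents kills the product, and when $j=k$ the hypothesis $e_{1j}e_{j1}=e_{11}$ (which reads $e_{11}^2=e_{11}$ for $j=1$) gives $e_{11}$. Granting this, the relation $e_{ij}e_{kl}=\delta_{jk}e_{il}$ follows at once:
\[
 e_{ij}e_{kl} = (e_{i1}e_{1j})(e_{k1}e_{1l}) = e_{i1}(e_{1j}e_{k1})e_{1l} = \delta_{jk}\,e_{i1}e_{11}e_{1l} = \delta_{jk}\,e_{i1}e_{1l} = \delta_{jk}\,e_{il},
\]
using $e_{i1}e_{11}=e_{i1}$ in the last step. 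Since $\sum_i e_{ii}=1$ is part of the hypothesis, the $e_{ij}$ form a set of $n\times n$ matrix units, and Lemma \ref{LMU} then gives $R\cong M_n(e_{11}Re_{11})$.

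I expect no serious obstacle here: the argument is pure bookkeeping, and the only place demanding care is the uniform treatment of the boundary indices $i=1$ and $j=1$, where one must confirm that the compact formula $e_{ij}=e_{i1}e_{1j}$ is genuinely consistent with the given elements rather than introducing a clash. The absorption identities collected in the first step are exactly what guarantees this consistency.
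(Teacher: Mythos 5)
Your proof is correct and follows essentially the same route as the paper: define $e_{ij}=e_{i1}e_{1j}$, establish the key identity $e_{1j}e_{k1}=\delta_{jk}e_{11}$, compute $e_{ij}e_{kl}=\delta_{jk}e_{il}$ by the same chain of equalities, and invoke Lemma \ref{LMU}. The only difference is that you spell out the absorption identities and the case analysis behind $e_{1j}e_{k1}=\delta_{jk}e_{11}$, which the paper leaves as an observation.
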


\begin{proof}
For $i\ne j$ and $i,j\ne 1$,  define $e_{ij} = e_{i1}e_{1j}$.
Observe that our assumptions imply that  $e_{ij} = e_{i1}e_{1j}$  holds  in any case, that is, also if $i=j$
or one of $i,j$ is $1$. Note also  that
 $e_{1j}e_{k1}=\delta_{jk}e_{11}$ holds
 for all $j$ and $k$.
 Consequently, for all 
 $i,j,k,l$ we have
 $$e_{ij}e_{kl} = e_{i1}e_{1j}e_{k1}e_{1l} = \delta_{jk}e_{i1}e_{11}e_{1l} =\delta_{jk}e_{i1}e_{1l} = \delta_{jk}e_{il},$$
proving that $e_{ij}$ are matrix units.  Therefore, $R\cong M_n(e_{11}R e_{11})$ by Lemma
\ref{LMU}.
\end{proof}

We need two more lemmas.

\begin{lemma}\label{finallem}
    Let $e, f$ be a pair of orthogonal idempotents in a prime ring $R$ such that $eRe$ and $fRf$ are division rings. Then:
    \begin{enumerate}
        \item[{\rm (a)}] There exist $u\in eRf$ and $v\in fRe$ such that $uv=e$ and $vu=f$.
        \item[{\rm (b)}] $(e+f)R(e+f)\cong M_2(eRe)$.
    
        \item[{\rm (c)}] $eRe\cong fRf$. 
    \end{enumerate}
\end{lemma}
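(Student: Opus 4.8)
The plan is to prove (a) first, since (b) and (c) then follow almost formally, (b) from Lemma~\ref{fajnr} and (c) by exhibiting an explicit isomorphism. Throughout I will use that $eRe$ and $fRf$ are division rings, so in particular $e\neq 0$ and $f\neq 0$, and that the only nonzero idempotent of $eRe$ (resp. $fRf$) is its unity $e$ (resp. $f$): a nonzero idempotent in a division ring is invertible and hence equals the identity. I will also repeatedly use the orthogonality relations $ef=fe=0$ together with $eu=u=uf$ for $u\in eRf$ and $fv=v=ve$ for $v\in fRe$.

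For (a), I would first produce a nonzero $u\in eRf$: if $eRf=\{0\}$, then Lemma~\ref{defprime}(ii) (with $a=e$, $b=f$) would force $e=0$ or $f=0$, a contradiction. Next I want a partner in $fRe$ that does not annihilate $u$. Since $uf=u$ gives $u\cdot(fRe)=uRe$, and $uRe=\{0\}$ would again yield $u=0$ or $e=0$ by Lemma~\ref{defprime}(ii), there is some $v_0\in fRe$ with $uv_0\neq 0$. The element $uv_0$ lies in the division ring $eRe$ and is nonzero, hence invertible there; letting $w\in eRe$ be its inverse and setting $v=v_0w\in fRe$, I obtain $uv=(uv_0)w=e$. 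It then remains to check the second identity $vu=f$. Now $vu$ lies in $fRf$ and is idempotent, since $(vu)^2=v(uv)u=veu=vu$; moreover $vu\neq 0$, for $vu=0$ would give $e=e^2=(uv)(uv)=u(vu)v=0$. As $fRf$ is a division ring, its only nonzero idempotent is $f$, so $vu=f$.

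Granting (a), part (b) is an application of Lemma~\ref{fajnr} inside the ring $(e+f)R(e+f)$, whose unity is $g:=e+f$. I would take $e_{11}=e$, $e_{22}=f$ (orthogonal idempotents summing to $g$), and $e_{12}=u$, $e_{21}=v$; a short computation using $ef=fe=0$ shows $geg=e$, $u\in e(gRg)f$ and $v\in f(gRg)e$, while (a) supplies $e_{12}e_{21}=uv=e$ and $e_{21}e_{12}=vu=f$. Lemma~\ref{fajnr} then yields $gRg\cong M_2\bigl(e(gRg)e\bigr)$, and since $eg=ge=e$ we have $e(gRg)e=eRe$, giving (b). For (c) I would not argue abstractly but simply exhibit the isomorphism: define $\psi\colon eRe\to fRf$ by $\psi(x)=vxu$ and $\chi\colon fRf\to eRe$ by $\chi(y)=uyv$; these land in the correct rings because $fv=v$, $uf=u$, and so on. Using $uv=e$ and $vu=f$ one checks multiplicativity through the key line $\psi(x)\psi(y)=vx(uv)yu=vxyu=\psi(xy)$, and then that $\chi\psi=\mathrm{id}_{eRe}$ and $\psi\chi=\mathrm{id}_{fRf}$ via $\chi(\psi(x))=(uv)x(uv)=x$ and $\psi(\chi(y))=(vu)y(vu)=y$, so $\psi$ is the desired isomorphism.

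I expect the only genuine obstacle to be part (a), and within it the passage from the single relation $uv=e$ (obtained by rescaling on the right) to the simultaneous pair $uv=e$, $vu=f$. Rescaling controls only one of the two products; the second identity truly requires both primeness, to guarantee $vu\neq 0$, and the division-ring structure of $fRf$, to promote the nonzero idempotent $vu$ to the unity $f$. Once (a) is in hand, parts (b) and (c) are routine bookkeeping with the relations $uv=e$, $vu=f$ and the orthogonality of $e$ and $f$.
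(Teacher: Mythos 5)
Your proof is correct, and its overall architecture matches the paper's: part (a) first, then (b) via the matrix-units machinery and (c) via an explicit conjugation isomorphism. The first half of (a) is essentially the paper's argument — primeness (Lemma~\ref{defprime}(ii)) applied twice to produce a nonzero product in $eRf\cdot fRe\subseteq eRe$, then rescaling by its inverse in the division ring $eRe$ to get $uv=e$. The one genuine divergence is the step $vu=f$: the paper argues by contradiction, noting that if $vu\ne f$ then $vu-f$ is invertible in $fRf$, while $(vu-f)v=vuv-fv=v-v=0$, so left-multiplying by the inverse forces $v=0$, contradicting $uv=e\ne 0$. You instead observe that $vu$ is a nonzero idempotent of $fRf$ (nonzero because $e=(uv)^2=u(vu)v$) and hence equals the unity $f$. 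Your variant is exactly the argument the paper itself sketches in the remark following the lemma, where it is noted that the division-ring hypothesis on $fRf$ can be weakened to ``$f\ne 0$ and $fRf$ has no nontrivial idempotents''; so your proof of (a) in fact establishes that slightly stronger statement, which is a small gain over the paper's contradiction argument. For (b), your invocation of Lemma~\ref{fajnr} inside $(e+f)R(e+f)$ and the paper's direct verification of the $2\times 2$ matrix-unit relations followed by Lemma~\ref{LMU} are the same computation (Lemma~\ref{fajnr} is itself proved by reduction to Lemma~\ref{LMU}); for (c), your maps $x\mapsto vxu$ and $y\mapsto uyv$ are precisely the isomorphism of Remark~\ref{reim} written out explicitly, which is what the paper cites.
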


\begin{proof}
    (a) Using condition (ii) of
Lemma \ref{defprime} twice we infer that $eafbe\ne 0$ for some $a,b\in R$. As $eRe$ is a division ring with unity $e$, there is a $c\in R$ such that
$(eafbe)(ece)=e$. Thus, $u=eaf\in eRf$ and $v=fbece\in fRe$  satisfy $uv=e$.
Suppose $vu\ne f$. Then $vu-f$ is a nonzero, and hence invertible element of the division ring $fRf$. 
Observe that $vuv = ve =v$ and therefore
$(vu-f)v =0$.
Multiplying this equation
from the left by the inverse of $vu-f$, we arrive at  a contradiction $v= fv=0$. Thus, $vu=f$.

(b) Observe that $e_{11}=e$, $e_{12}=u$, $e_{21}= v$, $e_{22}= f$ form a set of $2\times 2$ matrix units of $(e+f)R(e+f)$, so we can apply Lemma \ref{LMU}.

(c) Use Remark \ref{reim}.
\end{proof}

We  actually only need  the assertion (a). However, (b) and
(c) give a more clear picture. We also remark that  we can replace the assumption that $fRf$ is a  division ring by a milder assumption that $f\ne 0$ and $fRf$ has no nontrivial idempotents. This is because $uv=e$ with $u\in eRf$ and $v\in fRe$ implies $(vu)^2= v(uv)u=veu=vu$. 

\begin{lemma}\label{lemaresz}
    If a prime ring $R$ contains pairwise orthogonal idempotents $e_1,\dots,e_n$ such that 
    their sum is $1$ and $e_iRe_i$ is a division ring for each $i$, then $R\cong M_n(e_1Re_1)$.
\end{lemma}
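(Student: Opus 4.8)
The plan is to build a full set of $n\times n$ matrix units for $R$ by invoking Lemma \ref{fajnr}, which reduces the task to producing the idempotents $e_{ii}$ together with the off-diagonal pieces $e_{1i}$ and $e_{i1}$ satisfying $e_{1i}e_{i1}=e_{11}$ and $e_{i1}e_{1i}=e_{ii}$. The diagonal idempotents are already handed to us: I would simply set $e_{ii}=e_i$ for $i=1,\dots,n$, since by hypothesis these are pairwise orthogonal idempotents summing to $1$. So the only thing left to construct is, for each $i\ge 2$, a compatible pair $(e_{1i},e_{i1})$.

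The key step is to apply Lemma \ref{finallem}(a) to each pair $e_1,e_i$. For a fixed $i\ge 2$, the two idempotents $e_1$ and $e_i$ are orthogonal (being distinct members of a pairwise orthogonal family), and both $e_1Re_1$ and $e_iRe_i$ are division rings by hypothesis; moreover $R$ is prime. These are exactly the hypotheses of Lemma \ref{finallem}(a), so there exist $u_i\in e_1Re_i$ and $v_i\in e_iRe_1$ with $u_iv_i=e_1$ and $v_iu_i=e_i$. Setting $e_{1i}=u_i$ and $e_{i1}=v_i$ gives elements $e_{1i}\in e_1Re_i=e_{11}Re_{ii}$ and $e_{i1}\in e_iRe_1=e_{ii}Re_{11}$ satisfying precisely $e_{1i}e_{i1}=e_{11}$ and $e_{i1}e_{1i}=e_{ii}$.

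With these in hand, every hypothesis of Lemma \ref{fajnr} is met, so the collection $\{e_{ii},e_{1i},e_{i1}\}$ extends to a set of $n\times n$ matrix units, and that lemma immediately yields $R\cong M_n(e_{11}Re_{11})=M_n(e_1Re_1)$, as claimed.

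I do not expect any serious obstacle here: the lemma is essentially an assembly step, stitching together the building blocks provided by Lemmas \ref{finallem} and \ref{fajnr}. The only point requiring the slightest care is bookkeeping—verifying that the membership conditions $u_i\in e_{11}Re_{ii}$ and $v_i\in e_{ii}Re_{11}$ match the indexing convention of Lemma \ref{fajnr} (they do, once we identify $e_{ii}$ with $e_i$), and noting that Lemma \ref{finallem}(a) is applied separately for each $i$ with the \emph{same} first idempotent $e_1$, so that all the $e_{1i}$ genuinely share the common corner $e_{11}$. After that, the conclusion is a direct citation.
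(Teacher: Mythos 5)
Your proposal is correct and follows exactly the paper's own argument: apply Lemma \ref{finallem}(a) to each pair $(e_1,e_i)$ to obtain the elements $e_{1i}$ and $e_{i1}$, then invoke Lemma \ref{fajnr}. The extra bookkeeping remarks you make (same corner idempotent $e_1$ throughout, matching the indexing of Lemma \ref{fajnr}) are sound and merely spell out what the paper leaves implicit.
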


\begin{proof}
     Write $e_{ii}$  for $e_i$. Applying
Lemma \ref{finallem}\,(a) to the idempotents $e_{11}$ and $e_{ii}$, $i\ge 2$, we see that there exist
elements $e_{1i}\in e_{11}Re_{ii}$ and 
$e_{i1}\in e_{ii}Re_{11}$  satisfying $e_{1i}e_{i1} =e_{11}$ and $e_{i1}e_{1i} =e_{ii}$. Therefore,
$R\cong M_n(e_{11}Re_{11})$ by Lemma \ref{fajnr}.
\end{proof}

\section{Proof of the Wedderburn-Artin Theorem}\label{s3}

The standard, classical version of the 
Wedderburn-Artin Theorem was stated in  Introduction. We will prove a  more general version in which the assumption that $R$ is simple is replaced by the assumption that $R$ is prime.  
The main reason for this is that  prime rings are  more suitable for our proof than simple rings. On the other hand, it is interesting in its own right that nonzero prime left artinian rings are automatically simple.

\begin{theorem}\label{W1}   If $R$ is a nonzero prime left artinian ring, then there exist a division ring $D$ and a positive integer $n$ such that $R\cong M_n(D)$.
\end{theorem}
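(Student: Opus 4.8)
The plan is to construct, one at a time, a finite collection of pairwise orthogonal idempotents $e_1,\dots,e_n$ whose sum is $1$ and for which each corner $e_iRe_i$ is a division ring, and then to read off the conclusion from Lemma~\ref{lemaresz}, which is tailored precisely to this situation and delivers $R\cong M_n(e_1Re_1)$.

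First I would produce the initial idempotent. By Lemma~\ref{leftmi}, the nonzero left artinian ring $R$ possesses a minimal left ideal $L$. Since $R$ is prime, Lemma~\ref{defprime}\,(iii) applied with $I=J=L$ forbids $L^2=\{0\}$, so $L^2\ne\{0\}$. Lemma~\ref{LD} then hands me an idempotent $e_1\in L$ with $L=Re_1$ and $e_1Re_1$ a division ring.

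The inductive step is the heart of the argument. Suppose I already have pairwise orthogonal idempotents $e_1,\dots,e_k$, each with $e_iRe_i$ a division ring, and put $f=e_1+\dots+e_k$, which is again an idempotent. If $f=1$ I stop. Otherwise $g=1-f\ne 0$ is an idempotent, so $gRg$ is a nonzero ring with unity $g$, and by Lemma~\ref{l23} it is again prime and left artinian. Applying the reasoning of the previous paragraph inside $gRg$ yields an idempotent $e_{k+1}\in gRg$ for which $e_{k+1}(gRg)e_{k+1}$ is a division ring. Because $e_{k+1}\in gRg$ forces $ge_{k+1}=e_{k+1}g=e_{k+1}$, this corner coincides with $e_{k+1}Re_{k+1}$, so $e_{k+1}Re_{k+1}$ is a division ring; and since $e_if=e_i$ gives $e_ig=ge_i=0$ for $i\le k$, the new idempotent $e_{k+1}$ is orthogonal to each earlier $e_i$.

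It remains to argue that the process terminates. Writing $f_k=e_1+\dots+e_k$, the idempotents $f_k$ and $e_{k+1}$ are orthogonal and $e_{k+1}\ne 0$, so Lemma~\ref{l21} gives the strict inclusion $R(1-f_k)\supsetneq R(1-f_k-e_{k+1})=R(1-f_{k+1})$. Were no $f_k$ ever equal to $1$, these inclusions would assemble into an infinite strictly descending chain of left ideals, contradicting the left artinian hypothesis. Hence $f_n=e_1+\dots+e_n=1$ for some $n$, and Lemma~\ref{lemaresz} yields $R\cong M_n(e_1Re_1)$ with $D=e_1Re_1$ a division ring. The step I expect to demand the most care is the inductive one, namely verifying that the idempotent extracted inside the corner ring $gRg$ is genuinely orthogonal to all the previously chosen idempotents and that the corner computed in $gRg$ agrees with the one computed in $R$.
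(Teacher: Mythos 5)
Your proposal is correct and follows essentially the same argument as the paper: extract an idempotent $e_1$ with $e_1Re_1$ a division ring via Lemmas~\ref{leftmi}, \ref{defprime}, and \ref{LD}, iterate inside the corner $(1-e_1-\dots-e_k)R(1-e_1-\dots-e_k)$ using Lemma~\ref{l23}, force termination with Lemma~\ref{l21} and the descending chain condition, and finish with Lemma~\ref{lemaresz}. Your explicit verification that $e_{k+1}(gRg)e_{k+1}=e_{k+1}Re_{k+1}$ and that $e_{k+1}$ is orthogonal to the earlier idempotents is exactly the point the paper handles with the observation that $e_2\in(1-e_1)R(1-e_1)$ implies orthogonality.
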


\begin{proof} 
Since $R$ is left artinian, 
 $R$ contains a minimal left ideal $L$  (by Lemma \ref{leftmi}), and since $R$ is prime, $L^2\ne \{0\}$ (by Lemma \ref{defprime}). Lemma \ref{LD} therefore implies that $R$ contains an idempotent $e_1$ such that $e_1Re_1$
is a division ring. By Lemma \ref{l21} (applied to $e=0$),
$R\supsetneq R(1-e_1)$. If $e_1\ne 1$,
 then $(1-e_1)R(1-e_1)$ is a nonzero ring, which is also prime and left artinian by Lemma \ref{l23}. We can thus repeat  the argument from the beginning of the proof to conclude  that $(1-e_1)R(1-e_1)$ contains an idempotent $e_2$  such that
 $e_2(1-e_1)R(1-e_1)e_2$
 is a division ring. Observe that $e_2\in (1-e_1)R(1-e_1)$ implies that $e_1$ and $e_2$ are orthogonal. Therefore, 
  $e_2(1-e_1)R(1-e_1)e_2 =e_2Re_2$
 and
 $R(1-e_1)\supsetneq R(1-e_1-e_2)$  by Lemma \ref{l21}. If $e_1+e_2\ne 1$,  the same argument shows that there exists an
 idempotent $e_3$ belonging to $(1-e_1-e_2)R (1-e_1-e_2) $, and hence orthogonal to $e_1$ and $e_2$, such that 
  $e_3(1-e_1-e_2)R(1-e_1-e_2)e_3 =e_3Re_3$
 is a division ring and $R(1-e_1-e_2)\supsetneq R(1-e_1-e_2-e_3)$. We continue this process, which must stop after finitely many steps since $R$ is left artinian. Therefore, there exist pairwise orthogonal idempotents $e_1,\dots,e_n\in R$
such that $e_iRe_i$ is a division ring and $e_1+\dots+e_n=1$. Lemma \ref{lemaresz} now gives the desired conclusion that $R\cong M_n(e_1Re_1)$.
  \end{proof}

\end{document}